\newtheorem{theorem}{Theorem}
\theoremstyle{plain}
\newtheorem{corollary}{Corollary}
\newtheorem{definition}{Definition}
\newtheorem{lemma}{Lemma}
\newtheorem{remark}{Remark}
\numberwithin{equation}{section}
\begin{document}
\title[An approach for minimal surface family passing a curve]{An approach
for minimal surface family passing a curve}
\author{Sedat Kahyao\u{g}lu}
\address{Ondokuz May\i s University}
\email{sedatkahyaoglu@hotmail.com}
\author{Emin Kasap}
\curraddr{Ondokuz May\i s University}
\email{kasape@omu.edu.tr}
\date{August 15, 2014}
\subjclass[2010]{ 53A10, 53C22, }
\keywords{Minimal surface, surface family, Frenet frame, geodesic curve}

\begin{abstract}
We investigate minimal surfaces passing a given curve in 
{}$\mathbb{R}${}%
${{}^3}$%
. Using the Frenet frame of a given curve and isothermal parameter, we derive
the necessary and sufficient condition for minimal surface. Also we derive
the parametric representation of two minimal surface families passing a
circle and a helix as examples.
\end{abstract}

\maketitle

\section{Introduction}

\bigskip

A surface is called minimal if its mean curvature vanishes everywhere \cite%
{carmo}.Minimal surfaces are among the most important objects studied in
differential geometry. The theory of minimal surfaces in Euclidean spaces $%
\mathbb{R}
^{3}$ has its roots in the calculus of variations developed by Euler and
Lagrange in the 18-th century and in later investigations by Enneper,
Scherk, Schwarz, Riemann and Weierstrass in the 19-th century. During the
years, many great mathematicians have contributed to this theory.

Early, two problems were posed from the Belgian physicist Plateau and from
the Swedish mathematician Bj\"{o}rling in the 19-th century for minimal
surfaces passing a curve. Plateau set the problem as 'find a minimal surface 
$M$ having a curve $C$ as boundary'. This problem was a natural outgrowth of
his physical experiments on soap films. Plateau's problem was first solved by
Douglas and Rado. Bj\"{o}rling considered is to find the minimal surface
passing through a given non-closed analytic curve $C$, with given tangent
planes along $C$. The problem was inquired and solved by Bj\"{o}rling. A
solution of this problem always exists and is unique.This solution of Bj%
\"{o}rling's problem always applicable one to find the minimal surface whenever
one of its geodesic lines or one of its asymptotic lines or one of its lines
of curvature is known. One can obtain and detailed description of minimal
surfaces, Plateau's and Bj\"{o}rling's problems from \cite{osserman} and \cite%
{oprea}

Recently, parametric representation of a surface family passing a given
geodesic curve was studied in \cite{wang} which using the Ferenet frame of a
given curve. In like manner, Li et al. \cite{line of curvature}and Bayram et
al.\cite{asymptotic curve} studied the necessary and sufficient condition
for line of curvature and asymptotic curve, respectively. Li et al. \cite%
{approximation} studied the approximation minimal surface with geodesics by
using Dirichlet function.

In this paper, without limitation for curve such that it becomes geodesic
lines or asymptotic lines or lines of curvature on minimal surface, we
investigate the parametric representation of minimal surface family passing
a given curve. We utilize the Frenet frame of given curve and isothermal
parameter on minimal surface. We derive the necessary and sufficient
condition for a surface which satisfying both passing a given curve and
becomes minimal.

In section two, we give definitions of isothermal parameter of a surface and
Frenet frame of a curve. In section three, we derive the necessary and
sufficient condition for minimal surface. In section four and five, we
present parametric representations of minimal surface families passing a
given circle and a helix, respectively.

\section{Preliminaries}

We have following tools for minimal surfaces and curves.

\begin{definition}
\cite{carmo}Let $x:x(s,t)\subset 
\mathbb{R}
^{3}$ be a reguler surface where $s\in I\subset $ $%
\mathbb{R}
^{3}$and $t\in J\subset $ $%
\mathbb{R}
^{3}$. A parametrization $x:x(s,t)\subset 
\mathbb{R}
^{3}$ is called isothermal if 
\begin{equation}
<x_{s},x_{s}>=<x_{t},x_{t}>\text{and}<x_{s},x_{t}>=0.  \label{isothermal}
\end{equation}
\end{definition}

We utilize isothermal parameters for parameterization of minimal surfaces.
This requirement is not restriction because of the following lemma.

\begin{lemma}
\cite{oprea}Isothermal parameters exist on any minimal surface in $%
\mathbb{R}
^{3}$.
\end{lemma}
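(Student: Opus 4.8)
The plan is to prove the local statement, since the word ``exist'' here only requires that every point of a minimal surface $M\subset\mathbb{R}^3$ have a neighbourhood carrying an isothermal parametrization in the sense of \eqref{isothermal}. Fix $p\in M$. After a rigid motion we may assume the tangent plane at $p$ is horizontal, so by the implicit function theorem a neighbourhood of $p$ is a graph $z=f(x,y)$ over a domain $D\subset\mathbb{R}^2$. In these coordinates the first fundamental form has $E=1+f_x^2$, $F=f_xf_y$, $G=1+f_y^2$, with $W:=\sqrt{EG-F^2}=\sqrt{1+f_x^2+f_y^2}$, and the vanishing of the mean curvature is equivalent to the divergence-form identity $\operatorname{div}\!\big(\nabla f/W\big)=0$. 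I would also record at this stage that solutions of this elliptic equation are real-analytic, hence so are $f$ and $E,F,G,W$; this is the only point at which minimality genuinely intervenes.

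The next step is to reduce the problem to a single scalar PDE. Writing $z=x+iy$, a completion of squares gives
$E\,dx^2+2F\,dx\,dy+G\,dy^2=\lambda\,|dz+\mu\,d\bar z|^2$
with $\lambda=(E+G+2W)/4>0$ and $\mu=(E-G+2iF)/(E+G+2W)$, and a short computation yields $|\mu|^2=(E+G-2W)/(E+G+2W)<1$ (using $E+G-2W\ge(\sqrt{E}-\sqrt{G})^2\ge 0$ and $W>0$). Hence producing isothermal parameters amounts to finding a local diffeomorphism $\phi=u+iv$ on a subdomain of $D$ solving the Beltrami equation $\phi_{\bar z}=\mu\,\phi_z$: this forces $|d\phi|^2=|\phi_z|^2|dz+\mu\,d\bar z|^2$ and real Jacobian $|\phi_z|^2(1-|\mu|^2)>0$, so the metric becomes $\lambda|\phi_z|^{-2}(du^2+dv^2)$. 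Re-parametrizing $M$ by $(u,v)$ via $z=\phi^{-1}(u+iv)$ then gives a parametrization $\mathbf{X}(u,v)$ whose first fundamental form is exactly this conformal multiple of $du^2+dv^2$, i.e. $\langle\mathbf{X}_u,\mathbf{X}_u\rangle=\langle\mathbf{X}_v,\mathbf{X}_v\rangle$ and $\langle\mathbf{X}_u,\mathbf{X}_v\rangle=0$, which is \eqref{isothermal}.

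The remaining, and genuinely substantive, ingredient is the local solvability of $\phi_{\bar z}=\mu\,\phi_z$ by a map with nonvanishing Jacobian. I expect this to be the main obstacle: it is the Korn--Lichtenstein theorem (valid already for H\"older-continuous $\mu$ with $\|\mu\|_\infty<1$), and since $\mu$ is here real-analytic one may instead invoke Gauss's classical argument --- complexify and solve a linear first-order PDE for an integrating factor by Cauchy--Kovalevskaya. Everything else --- the formulas for $E,F,G,W$, the identity $|\mu|^2<1$, and the bookkeeping turning a Beltrami solution into an isothermal chart --- is elementary. Thus minimality is not strictly needed for the existence of isothermal coordinates, but it supplies exactly the analyticity that makes the classical existence proof available without any quasiconformal machinery.
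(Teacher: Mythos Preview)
Your argument is correct as a proof sketch: the reduction of the first fundamental form to $\lambda\,|dz+\mu\,d\bar z|^2$ with $|\mu|<1$ is computed accurately, and invoking either Korn--Lichtenstein or (via analyticity of minimal graphs) the Gauss/Cauchy--Kovalevskaya argument to solve the Beltrami equation is a legitimate way to finish. You are also right that minimality is not strictly necessary for the conclusion --- it only buys the analyticity that lets one avoid the measurable/quasiconformal machinery.

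However, there is nothing to compare against: the paper does not prove this lemma at all. It is quoted verbatim from Oprea's textbook and used as a black box. If you want to match the spirit of the cited source, note that the textbook proof for the \emph{minimal} case is more direct than your Beltrami route: writing a neighbourhood as a graph $z=f(x,y)$ with $p=f_x$, $q=f_y$, $W=\sqrt{1+p^2+q^2}$, the minimal surface equation $\partial_x(p/W)+\partial_y(q/W)=0$ makes the $1$-forms $\frac{1+p^2}{W}\,dx+\frac{pq}{W}\,dy$ and $\frac{pq}{W}\,dx+\frac{1+q^2}{W}\,dy$ closed, and their potentials furnish isothermal coordinates by an elementary Jacobian check. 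That argument uses minimality in an essential (not merely regularity-boosting) way and needs no PDE existence theorem beyond the Poincar\'e lemma, which is why textbooks prefer it for this particular statement. Your approach is more general; the classical one is more self-contained.
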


The mean curvature of a surface can be calculated by 
\begin{equation}
H=\frac{Eg-2Ff+Ge}{EG-F^{2}}  \label{mean}
\end{equation}

where

\begin{equation*}
E=<x_{s},x_{s}>,F=<x_{s},x_{t}>,G=<x_{t},x_{t}>
\end{equation*}%
\begin{equation*}
e=<n,x_{ss}>,f=<n,x_{st}>,g=<n,x_{tt}>\text{.}
\end{equation*}

and $n$ is unit normal of the surface defined by 
\begin{equation*}
n(s,t)=\frac{x_{s}(s,t)\times x_{t}(s,t)}{\left\Vert x_{s}(s,t)\times
x_{t}(s,t)\right\Vert }
\end{equation*}%
If we apply isothermal conditions on \ref{mean} then we have the following
corollary.

\begin{corollary}
\cite{carmo}Let $x$ is isothermal. Then $x$ is minimal if and only if its
coordinate functions are harmonic i.e. 
\begin{equation}
x_{ss}+x_{tt}=0.  \label{harmonic}
\end{equation}
\end{corollary}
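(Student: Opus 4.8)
The plan is to substitute the isothermal conditions \eqref{isothermal} into the mean-curvature formula \eqref{mean}, and then to supply the one extra geometric fact needed to pass from a statement about the inner product $<n,\,x_{ss}+x_{tt}>$ to a statement about the vector $x_{ss}+x_{tt}$ itself.

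First I would set $E=G=:\lambda^{2}$ and $F=0$ in \eqref{mean}. Since $x$ is regular, $x_{s}\times x_{t}\neq 0$, hence $\lambda^{2}=E>0$, and \eqref{mean} collapses to
\begin{equation*}
H=\frac{\lambda^{2}g-0+\lambda^{2}e}{\lambda^{4}}=\frac{e+g}{\lambda^{2}}.
\end{equation*}
Thus $x$ is minimal if and only if $e+g=0$, i.e. if and only if $<n,x_{ss}>+<n,x_{tt}>=<n,\,x_{ss}+x_{tt}>=0$. This already gives the easy direction: if the coordinate functions are harmonic then $x_{ss}+x_{tt}=0$, so $e+g=0$ and $H=0$.

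For the converse I need that $x_{ss}+x_{tt}$ has no component tangent to the surface, so that a vanishing inner product with the unit normal $n$ forces $x_{ss}+x_{tt}=0$. To obtain this I would differentiate the two relations in \eqref{isothermal}: differentiating $<x_{s},x_{s}>=<x_{t},x_{t}>$ with respect to $s$ gives $<x_{ss},x_{s}>=<x_{st},x_{t}>$, while differentiating $<x_{s},x_{t}>=0$ with respect to $t$ gives $<x_{st},x_{t}>+<x_{s},x_{tt}>=0$; combining these yields $<x_{ss}+x_{tt},\,x_{s}>=0$. Interchanging the roles of $s$ and $t$ gives $<x_{ss}+x_{tt},\,x_{t}>=0$ by the same computation. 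Since $x_{s}$ and $x_{t}$ span the tangent plane, $x_{ss}+x_{tt}$ is a scalar multiple of $n$, and reading off its normal component shows $x_{ss}+x_{tt}=(e+g)\,n=\lambda^{2}H\,n$. Combining the two directions, $H=0\iff e+g=0\iff x_{ss}+x_{tt}=0$, which is precisely \eqref{harmonic}. The only step that is not a formal substitution is showing that $x_{ss}+x_{tt}$ is purely normal; the condition $\lambda^{2}=E\neq 0$, guaranteed by regularity of $x$, is what makes the division in the displayed formula legitimate.
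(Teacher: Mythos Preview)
Your argument is correct and is precisely the classical proof found in do~Carmo's book, which the paper cites. The paper itself does not give a proof: it merely states the corollary with the citation \cite{carmo} and the one-line remark that it follows ``if we apply isothermal conditions on \eqref{mean}.'' Your write-up supplies what that remark suppresses, namely the computation that in isothermal coordinates $H=(e+g)/\lambda^{2}$ and, crucially, the lemma that $x_{ss}+x_{tt}$ is orthogonal to both $x_{s}$ and $x_{t}$ (obtained by differentiating \eqref{isothermal}), so that the vanishing of $\langle n,\,x_{ss}+x_{tt}\rangle$ is equivalent to the vanishing of $x_{ss}+x_{tt}$ itself. There is nothing to correct.
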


Let $r:r(s)\subset 
\mathbb{R}
^{3}$ be an arclenght parametrized curve that is $\left\Vert r^{\prime
}(s)\right\Vert =1$ . We assume that $r^{\prime \prime }(s)\neq 0$ for some $%
s\in I$, otherwise the curve $r$ corresponds to a straight lines.The Frenet
frame of a regular curve $r$ is defined $\{T(s),N(s),B(s)\}$ where $%
T(s)=r^{\prime }(s)$, $N(s)=\frac{T^{\prime }(s)}{\left\Vert T^{\prime
}(s)\right\Vert }$, $B(s)=T(s)\times N(s)$, are called unit tangent, unit
normal and binormal of $r$, respectively. The derivatives of the Franet
frame are given by

\begin{align}
T^{\prime }(s)& =\kappa (s)N(s)  \notag \\
N^{\prime }(s)& =-\kappa (s)T(s)+\tau (s)B(s) \\
B^{\prime }(s)& =-\tau (s)N(s)
\end{align}%
where $\kappa (s)$ and $\tau (s)$ are the curvature and torsion of the curve 
$r$, respectively.

By utilizing the Frenet frame, Wang et al.\cite{wang}, construct a surface
pencil that posseses $r$ as a common geodesic . They gave parametric
representation of surface as

\begin{equation}
x(s,t)=r(s)+u(s,t)T(s)+v(s,t)N(s)+w(s,t)B(s)  \label{surpen}
\end{equation}%
where $u(s,t),v(s,t),w(s,t)$ are smooth functions with

\begin{equation}
u(s,t_{0})=v(s,t_{0})=w(s,t_{0})=0\text{.}  \label{isopara}
\end{equation}

The unit normal of the surface is 
\begin{equation*}
n(s,t)=\frac{(\phi _{1}(s,t)T(s)+\phi _{2}(s,t)N(s)+\phi _{3}(s,t)B(s))}{%
\sqrt{\left( \phi _{1}(s,t)\right) ^{2}+\left( \phi _{2}(s,t)\right)
^{2}+\left( \phi _{3}(s,t)\right) ^{2}}}
\end{equation*}%
where 
\begin{eqnarray*}
\phi _{1}(s,t) &=&w_{t}(v_{s}+\kappa u-\tau w)-v_{t}(w_{s}+\tau v), \\
\phi _{2}(s,t) &=&u_{t}(w_{s}+\tau v)-w_{t}(1+u_{s}-\kappa v), \\
\phi _{3}(s,t) &=&v_{t}(1+u_{s}-\kappa v)-u_{t}(v_{s}+\kappa u-\tau w).
\end{eqnarray*}

They derived the following identities for the necessary and sufficient
condition for the curve $r$ as a geodesic on the surface $x$.

\begin{equation*}
u(s,t_{0})=v(s,t_{0})=w(s,t_{0})=0\text{.}
\end{equation*}%
\begin{equation}
\phi _{1}(s,t_{0})=\phi _{3}(s,t_{0})=0  \label{geo1}
\end{equation}%
\begin{equation}
\phi _{2}(s,t_{0})\neq 0.  \label{geo2}
\end{equation}

\bigskip In \cite{asymptotic curve}, Bayram et al. derived the necessary and
sufficient condition for the curve $r$ as an asymptotic line on the surface $%
x$ as%
\begin{equation}
\frac{\partial \phi _{1}}{\partial s}(s,t_{0})-\kappa (s)\phi _{2}(s,t_{0})=0
\label{asymp}
\end{equation}

We express minimal surfaces with this parametric representation.

\section{Minimal surface family}

In this section, we derive necessary and sufficient condition for minimal
surface family passing the curve $r:r(s)\subset 
\mathbb{R}
^{3}$.

If we calculate the partial derivatives of \ $x:x(s,t)\subset 
\mathbb{R}
^{3}$ then we have, 
\begin{eqnarray*}
x_{t} &=&u_{t}T+v_{t}N+w_{t}B \\
x_{tt} &=&u_{tt}T+v_{tt}N+w_{tt}B \\
x_{s} &=&(1+u_{s}-\kappa v)T+(v_{s}+\kappa u-\tau w)N+(w_{s}+\tau v)B \\
x_{ss} &=&((1+u_{s}-\kappa v)_{s}-\kappa (v_{s}+\kappa u-\tau w))T \\
&&+((v_{s}+\kappa u-\tau w)_{s}+\kappa (1+u_{s}-\kappa v)-\tau (w_{s}+\tau
v))N \\
&&+((w_{s}+\tau v)_{s}+\tau (v_{s}+\kappa u-\tau w))B.
\end{eqnarray*}

\bigskip From \ref{isothermal} and \ref{harmonic} we have following
corollaries.

\begin{corollary}
If the surface $x$ is isothermal then 
\begin{align}
(1+u_{s}-\kappa v)^{2}+(v_{s}+\kappa u-\tau w)^{2}+(w_{s}+\tau v)^{2}&
=u_{t}^{2}+v_{t}^{2}+w_{t}^{2}  \label{isot} \\
(1+u_{s}-\kappa v)u_{t}+(v_{s}+\kappa u-\tau w)v_{t}+(w_{s}+\tau v)w_{t}& =0.
\label{isot2}
\end{align}
\end{corollary}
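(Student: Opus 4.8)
The plan is to prove this purely by substitution: the expansions of $x_s$ and $x_t$ in the Frenet frame have just been written down above, and the isothermal relations \ref{isothermal} are bilinear in these two vectors, so the corollary should drop out of one direct calculation with no essential difficulty.

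First I would observe that $\{T(s),N(s),B(s)\}$ is orthonormal for every $s$, so that the Euclidean inner product of two vectors expressed in this frame is just the dot product of their coordinate triples; in particular the mixed products $\langle T,N\rangle$, $\langle N,B\rangle$, $\langle T,B\rangle$ all vanish. Applying this to
\[
x_s=(1+u_s-\kappa v)\,T+(v_s+\kappa u-\tau w)\,N+(w_s+\tau v)\,B,\qquad x_t=u_t\,T+v_t\,N+w_t\,B,
\]
I would read off
\[
E=\langle x_s,x_s\rangle=(1+u_s-\kappa v)^2+(v_s+\kappa u-\tau w)^2+(w_s+\tau v)^2,\qquad G=\langle x_t,x_t\rangle=u_t^2+v_t^2+w_t^2,
\]
\[
F=\langle x_s,x_t\rangle=(1+u_s-\kappa v)u_t+(v_s+\kappa u-\tau w)v_t+(w_s+\tau v)w_t.
\]
Then I would invoke the hypothesis that $x$ is isothermal, that is $E=G$ and $F=0$: the first equation is exactly \ref{isot} and the second is exactly \ref{isot2}, which finishes the proof.

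The only obstacle here is bookkeeping rather than anything conceptual — one must pair each scalar coefficient with the correct frame vector and keep in mind that it is precisely the orthonormality of the Frenet frame that suppresses the cross terms which would otherwise clutter $E$, $F$, and $G$. (The same computation runs in reverse, so \ref{isot}--\ref{isot2} in fact characterise isothermality of this parametrization; the corollary records only the direction that is needed in what follows.)
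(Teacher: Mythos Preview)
Your proposal is correct and is exactly the argument the paper has in mind: the paper gives no separate proof of this corollary, merely noting that it follows ``from \ref{isothermal}'' once $x_s$ and $x_t$ have been expanded in the Frenet frame, and your computation of $E$, $F$, $G$ via the orthonormality of $\{T,N,B\}$ spells this out.
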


\begin{corollary}
If the surface $x$ is harmonic then%
\begin{align}
(1+u_{s}-\kappa v)_{s}-\kappa (v_{s}+\kappa u-\tau w)+u_{tt}& =0
\label{har1} \\
(v_{s}+\kappa u-\tau w)_{s}+\kappa (1+u_{s}-\kappa v)-\tau (w_{s}+\tau
v)+v_{tt}& =0  \label{har2} \\
(w_{s}+\tau v)_{s}+\tau (v_{s}+\kappa u-\tau w)+w_{tt}& =0.  \label{har3}
\end{align}
\end{corollary}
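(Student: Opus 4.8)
The plan is to read the word ``harmonic'' in the statement as the coordinate-free identity $x_{ss}+x_{tt}=0$ of equation \ref{harmonic}, and then to project this vector identity onto the Frenet frame $\{T(s),N(s),B(s)\}$ of $r$. Essentially all of the computational work is already in place, since the expansions of $x_{tt}$ and of $x_{ss}$ in terms of $T$, $N$, $B$ have been displayed at the start of this section.

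First I would invoke the Corollary of \cite{carmo} recalled in Section~2: for an isothermal parametrization, minimality is equivalent to the coordinate functions being harmonic, i.e. to $x_{ss}+x_{tt}=0$; this is exactly what ``the surface $x$ is harmonic'' means here. Next I would add the two displayed expansions term by term, grouping the coefficients of $T$, of $N$, and of $B$ separately. For instance the $T$-coefficient of $x_{ss}$ is $(1+u_{s}-\kappa v)_{s}-\kappa(v_{s}+\kappa u-\tau w)$ and the $T$-coefficient of $x_{tt}$ is $u_{tt}$, and analogously in the $N$- and $B$-directions. Finally, because for each fixed $s$ the triple $\{T(s),N(s),B(s)\}$ is an orthonormal, hence linearly independent, basis of $\mathbb{R}^{3}$, the vanishing of the vector $x_{ss}+x_{tt}$ is equivalent to the simultaneous vanishing of each of its three Frenet components. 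Reading these three scalar equations off gives precisely \ref{har1}, \ref{har2} and \ref{har3}.

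There is essentially no obstacle here beyond bookkeeping. The only points worth checking are that the Frenet frame is genuinely defined and is a pointwise basis of $\mathbb{R}^{3}$, which is guaranteed by the standing assumption $r^{\prime\prime}(s)\neq 0$, and that the $s$-differentiation used in forming $x_{ss}$ applies the Frenet formulas to the frame vectors correctly, which has already been carried out in the displayed expression for $x_{ss}$. I would also note that the argument is reversible, so \ref{har1}--\ref{har3} together are in fact equivalent to harmonicity, even though only the stated implication is needed in what follows; and that the preceding corollary on the isothermal conditions is obtained in exactly the same spirit, by forming the inner products $\langle x_{s},x_{s}\rangle$, $\langle x_{t},x_{t}\rangle$ and $\langle x_{s},x_{t}\rangle$ from the displayed expansions of $x_{s}$ and $x_{t}$ and using the orthonormality of $\{T,N,B\}$.
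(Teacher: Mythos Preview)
Your proposal is correct and matches the paper's approach: the paper does not give a separate proof of this corollary, but simply records the expansions of $x_{s}$, $x_{t}$, $x_{ss}$, $x_{tt}$ in the Frenet frame and then states that the corollary follows from equation \ref{harmonic}, which is exactly the projection argument you describe. Your additional remarks on reversibility and on the companion isothermal corollary are accurate but go slightly beyond what the paper spells out.
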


If we consider the surface satisfy the isothermal and harmonic conditions
together then we have the following theorem.

\begin{theorem}
Let $r:r(s)\subset 
\mathbb{R}
^{3}$ is be an arclenght parameterized curve. The surface \ref{surpen} is
minimal surface passing the curve $r$ if and only if there exist\ the smooth
functions $u(s,t),v(s,t),w(s,t)$ satisfying,%
\begin{equation*}
u(s,t_{0})=v(s,t_{0})=w(s,t_{0})=0
\end{equation*}%
\begin{eqnarray*}
(1+u_{s}-\kappa v)^{2}+(v_{s}+\kappa u-\tau w)^{2}+(w_{s}+\tau v)^{2}
&=&u_{t}^{2}+v_{t}^{2}+w_{t}^{2} \\
(1+u_{s}-\kappa v)u_{t}+(v_{s}+\kappa u-\tau w)v_{t}+(w_{s}+\tau v)w_{t} &=&0
\\
(1+u_{s}-\kappa v)_{s}-\kappa (v_{s}+\kappa u-\tau w)+u_{tt} &=&0 \\
(v_{s}+\kappa u-\tau w)_{s}+\kappa (1+u_{s}-\kappa v)-\tau (w_{s}+\tau
v)+v_{tt} &=&0 \\
(w_{s}+\tau v)_{s}+\tau (v_{s}+\kappa u-\tau w)+w_{tt} &=&0.
\end{eqnarray*}
\end{theorem}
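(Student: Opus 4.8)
The plan is to recognize the five displayed equations as nothing but the coordinate form, read off in the Frenet frame $\{T,N,B\}$, of three geometric conditions: that $x$ passes through $r$, that the parametrization $x(s,t)$ is isothermal, and that the coordinate functions of $x$ are harmonic. Once this is made explicit, the theorem follows by assembling the three corollaries of this section together with the condition \ref{isopara}; essentially no new computation is required beyond the formulas for $x_{s},x_{ss},x_{t},x_{tt}$ already recorded just before the statement.

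First I would dispose of the ``passing the curve'' clause. Setting $t=t_{0}$ in \ref{surpen} gives $x(s,t_{0})=r(s)+u(s,t_{0})T(s)+v(s,t_{0})N(s)+w(s,t_{0})B(s)$, and since $\{T(s),N(s),B(s)\}$ is an orthonormal basis of $\mathbb{R}^{3}$ for every $s$, the identity $x(s,t_{0})=r(s)$ holds for all $s$ if and only if $u(s,t_{0})=v(s,t_{0})=w(s,t_{0})=0$. This is exactly the first line of the system, and it is the precise meaning of ``$r$ is the parameter curve $t=t_{0}$ of $x$''.

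Next I would treat minimality. Substituting the expressions for $x_{s}$ and $x_{t}$ into $\langle x_{s},x_{s}\rangle=\langle x_{t},x_{t}\rangle$ and $\langle x_{s},x_{t}\rangle=0$ and using orthonormality of $\{T,N,B\}$ produces precisely \ref{isot} and \ref{isot2}; since these are literal expansions, they are equivalent to ``$x$ is isothermal'', so the corollary whose conclusion is \ref{isot}--\ref{isot2} is in fact an equivalence, not just a one-way implication. In the same way, substituting the expressions for $x_{ss}$ and $x_{tt}$ into the vector equation $x_{ss}+x_{tt}=0$ gives, component by component, precisely \ref{har1}--\ref{har3}, again an equivalence with ``the coordinate functions of $x$ are harmonic''. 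Finally, the corollary characterizing minimality of an isothermal surface states that an isothermal $x$ is minimal if and only if $x_{ss}+x_{tt}=0$. Chaining these equivalences: $x$ is an isothermally parametrized minimal surface if and only if \ref{isot}, \ref{isot2}, \ref{har1}, \ref{har2}, \ref{har3} all hold. Combined with the previous paragraph, this establishes both implications of the theorem.

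The one point that genuinely needs care is the interplay between the words ``minimal'' and ``isothermal''. A surface of the form \ref{surpen} could conceivably be minimal while this particular parametrization fails to be isothermal, so what is actually characterized is the class of smooth triples $(u,v,w)$ for which \ref{surpen} is an \emph{isothermally parametrized} minimal surface through $r$; the Lemma asserting that isothermal parameters exist on any minimal surface is what makes this the natural normalization, and I would state at the outset of the proof that $(s,t)$ are taken to be isothermal. With that convention fixed, the remaining content is the Frenet--Serret bookkeeping already performed in the lines preceding the theorem, and the proof reduces to quoting the three corollaries in the order indicated above.
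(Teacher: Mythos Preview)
Your proposal is correct and follows essentially the same route as the paper: both reduce the theorem to the conjunction ``$x(s,t_{0})=r(s)$'' $+$ ``$x$ is isothermal'' $+$ ``$x$ is harmonic'', invoking Lemma~1 and Corollary~1 to pass between minimality and the isothermal/harmonic conditions. Your discussion of the isothermal--minimal distinction is in fact more careful than the paper's own proof, which simply asserts the existence of isothermal parameters without addressing whether the \emph{given} $(s,t)$ parametrization is the isothermal one.
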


\begin{proof}
If the conditon \ref{isopara} is satisfied then 
\begin{equation*}
x(s,t_{0})=r(s).
\end{equation*}%
Let the surface $x(s,t)$ is be minimal surface. From Lemma 1 and Corollary 1
there exist isothermal parameters and the surface $x$ is harmonic. Thus,
there exist\ the smooth functions $u(s,t),v(s,t),w(s,t)$ satisfying the
conditions \ref{isot}-\ref{har3}.

On the other hand, if there exist\ the smooth functions $%
u(s,t),v(s,t),w(s,t) $ satisfying the conditions \ref{isot}-\ref{har3} then
the surface $x(s,t)$ is isothermal and harmonic. Thus the surface $x(s,t)$
is minimal.
\end{proof}

\section{Minimal surface family passing a circle}

In this section, we investigate a minimal surface family passing a circle.

Let the curve $r:r(s)\subset 
\mathbb{R}
^{3}$ is be a circle,%
\begin{equation*}
r(s)=(4\cos \frac{s}{4},4\sin \frac{s}{4},0).
\end{equation*}%
The curvature and torsion of the circle $r$ are become 
\begin{equation*}
\kappa =\frac{1}{4},\text{ \ \ }\tau =0.
\end{equation*}

If we choose $u(s,t)=u(t)$, $v(s,t)=v(t)$, $w(s,t)=w(t)$ and $t_{0}=0$ then
from \ref{isot}-\ref{har3} we have 
\begin{eqnarray}
(1-\frac{v}{4})^{2}+\frac{1}{16}u^{2} &=&u_{t}^{2}+v_{t}^{2}+w_{t}^{2}
\label{c1} \\
(1-\frac{v}{4})u_{t}+\frac{1}{4}uv_{t} &=&0  \label{c2} \\
u_{tt}-\frac{1}{16}u &=&0  \label{c3} \\
v_{tt}-\frac{1}{16}v+\frac{1}{4} &=&0  \label{c4} \\
w_{tt} &=&0.  \label{c5}
\end{eqnarray}

From \ref{c5} 
\begin{equation*}
w(t)=ct.
\end{equation*}%
From \ref{c3} and \ref{c4}%
\begin{equation*}
u(t)=c_{2}(e^{\frac{t}{4}}-e^{-\frac{t}{4}})
\end{equation*}%
\begin{equation*}
v(t)=c_{3}e^{\frac{t}{4}}+c_{4}e^{-\frac{t}{4}}+4.
\end{equation*}%
From \ref{c1} and \ref{c2} 
\begin{equation*}
u(t)=0
\end{equation*}%
\begin{equation*}
v(t)=2(-1\pm \sqrt{1-c^{2}})e^{\frac{t}{4}}+2(-1\mp \sqrt{1-c^{2}})e^{-\frac{%
t}{4}}+4
\end{equation*}%
\begin{equation*}
w(t)=ct
\end{equation*}%
where $c\in \lbrack -1,1]$.

The Frenet frame of the circle $r$ is 
\begin{eqnarray*}
T(s) &=&(-\sin \frac{s}{4},\cos \frac{s}{4},0) \\
N(s) &=&(-\cos \frac{s}{4},-\sin \frac{s}{4},0) \\
B(s) &=&(0,0,1).
\end{eqnarray*}

Consequently, we derive the minimal surface family passing the circle $r$ as 
\begin{equation*}
x(s,t;\left\vert c\right\vert )=(2((1\mp \sqrt{1-c^{2}})e^{\frac{t}{4}%
}+(1\pm \sqrt{1-c^{2}})e^{-\frac{t}{4}})(\cos \frac{s}{4},\sin \frac{s}{4}%
),ct).
\end{equation*}

\begin{remark}
Its obvious that each members of this minimal surface family is a surface of
revolution. We know that only surface of \ revolution whic is minimal is a
piece of catenoid or a plane. Thus each member of this family is a piece of
catenoid or a plane.
\end{remark}

\begin{corollary}
For $c=\pm 1$, the circle $r$ is a geodesic on the surface 
\begin{equation*}
x(s,t;1)=(4\cosh \frac{t}{4}\cos \frac{s}{4},4\cosh \frac{t}{4}\sin \frac{s}{%
4},\pm t)
\end{equation*}
\end{corollary}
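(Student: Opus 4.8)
The plan is to verify the geodesic criterion \ref{geo1}--\ref{geo2} of Wang et al.\ for the member $|c|=1$ of the family constructed in this section, so that no separate differential-geometric computation of covariant derivatives is needed.

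First I would read off the coefficient functions at $c=\pm 1$. Since $\sqrt{1-c^{2}}=0$, the closed forms obtained above collapse to $u(s,t)\equiv 0$, $v(s,t)=4-4\cosh\frac{t}{4}$ and $w(s,t)=\pm t$. Plugging these into \ref{surpen} together with the Frenet frame of the circle computed above, the term $vN$ restores the radius to $4-v=4\cosh\frac{t}{4}$ and $wB$ supplies the third coordinate, so $x(s,t;1)=(4\cosh\frac{t}{4}\cos\frac{s}{4},4\cosh\frac{t}{4}\sin\frac{s}{4},\pm t)$ as claimed. Evaluating at $t_{0}=0$ gives $u(s,0)=v(s,0)=w(s,0)=0$, i.e.\ condition \ref{isopara}; in particular $x(s,0)=r(s)$, so the circle lies on the surface.

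Next I would substitute $\kappa=\frac14$, $\tau=0$ and these functions into the expressions for $\phi_{1},\phi_{2},\phi_{3}$ displayed just before \ref{geo1}, evaluated at $t_{0}=0$. Since $u,v,w$ are independent of $s$, all $s$-derivatives vanish; since $u\equiv 0$, $u_{t}=0$; since $v_{t}(s,t)=-\sinh\frac{t}{4}$, $v_{t}(s,0)=0$; only $w_{t}=\pm 1$ survives. With these substitutions every cross term drops out, and one gets $\phi_{1}(s,0)=0$, $\phi_{3}(s,0)=v_{t}(s,0)\bigl(1+u_{s}-\kappa v\bigr)=0$, while $\phi_{2}(s,0)=-w_{t}\bigl(1+u_{s}-\kappa v(s,0)\bigr)=-w_{t}=\mp 1\neq 0$. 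These are exactly \ref{geo1} and \ref{geo2}, so by the theorem of Wang et al.\ the circle $r$ is a geodesic on $x(s,t;1)$.

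There is essentially no hard step; the whole content is the single observation that $v_{t}(s,0)=0$, i.e.\ that $t_{0}=0$ is a critical point of the profile radius $4\cosh\frac{t}{4}$. This is precisely what forces the unit surface normal along the curve to satisfy $n(s,0)=\pm N(s)$, hence the acceleration $r''(s)=\kappa(s)N(s)$ of the arclength curve $r$ to be purely normal to the surface --- which is just the classical fact that the waist circle of a catenoid is a geodesic. So the only ``obstacle'' is correctly bookkeeping the Frenet coefficients; I would present the $\phi_{i}$ computation as the main argument and add the catenoid picture as a one-line remark.
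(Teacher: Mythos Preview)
Your proof is correct and follows exactly the approach of the paper: the paper's own proof is the one-line assertion that for $c=\pm 1$ conditions \ref{geo1} and \ref{geo2} hold, and you have simply carried out that verification explicitly (together with the extra, but harmless, catenoid-waist remark). The key observation you isolate, $v_{t}(s,0)=0$, is precisely what makes $\phi_{1}(s,0)=\phi_{3}(s,0)=0$ while $\phi_{2}(s,0)=\mp 1\neq 0$.
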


\begin{proof}
For $c=\pm 1$, \ref{geo1} and \ref{geo2} are satisfied. Thus, the circle $r$
is a geodesic on the surface $x(s,t;1)$.
\end{proof}

We show that some members of minimal surface family in Figure~\ref{fig:Figure1}-~\ref{fig:Figure4} for $c=1,$
$\frac{\sqrt{3}}{2},$ $\frac{\sqrt{5}}{3}$ and $0\leq s\leq 8\pi $, $-5\leq
t\leq 5$ .\\

\begin{figure}
\centering
\includegraphics{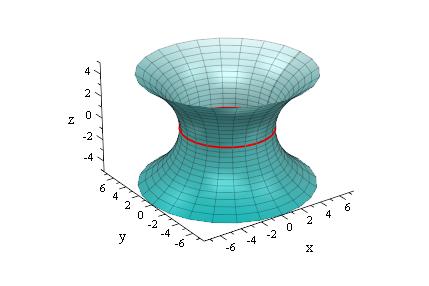}
\caption{The circle $r$ and minimal surface $x(s,t;0).$}
\label{fig:Figure1}
\end{figure}

\begin{figure}
\centering
\includegraphics{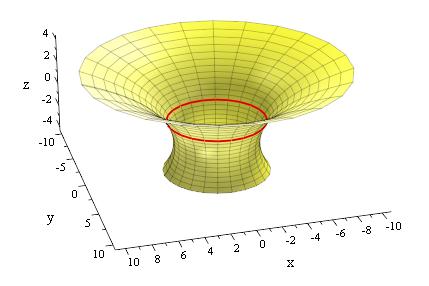}
\caption{The circle $r$ and minimal surface $x(s,t;\frac{\protect\sqrt{3}}{2}).$}
\label{fig:Figure2}
\end{figure}

\newpage

\begin{figure}
\centering
\includegraphics{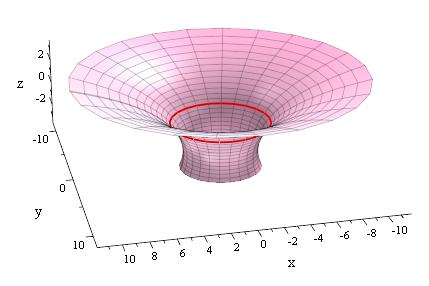}
\caption{The circle $r$ and minimal surface $x(s,t;\frac{\protect\sqrt{5}}{3}).$}
\label{fig:Figure3}
\end{figure}

\begin{figure}
\centering
\includegraphics{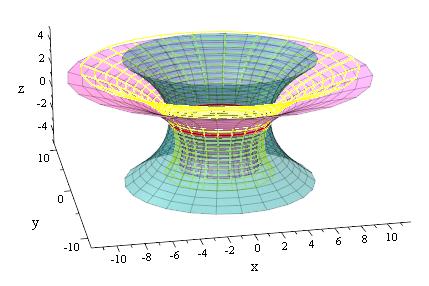}
\caption{Three members of minimal surface family $x(s,t;c).$}
\label{fig:Figure4}
\end{figure}

\newpage

\section{Minimal surface family passing a helix}

In this section, we investigate a minimal surface family passing a helix.

Let the curve $\alpha :\alpha (s)\subset 
\mathbb{R}
^{3}$ is be a helix,%
\begin{equation*}
\alpha (s)=(\frac{\sqrt{2}}{2}\cos s,\frac{\sqrt{2}}{2}\sin s,\frac{\sqrt{2}%
}{2}s).
\end{equation*}%
The curvature and torsion of the circle $r$ are become 
\begin{equation*}
\kappa (s)=\tau (s)=\frac{\sqrt{2}}{2}
\end{equation*}

If we choose $u(s,t)=u(t)$, $v(s,t)=v(t)$, $w(s,t)=w(t)$ and $t_{0}=0$ then
from \ref{isot}-\ref{har3} we have 
\begin{eqnarray}
(1-\frac{\sqrt{2}}{2}v)^{2}+\frac{1}{2}(u-w)^{2}+\frac{1}{2}v^{2}
&=&u_{t}^{2}+v_{t}^{2}+w_{t}^{2}  \label{h1} \\
(1-\frac{\sqrt{2}}{2}v)u_{t}+\frac{1}{2}(u-w)v_{t}+\frac{\sqrt{2}}{2}vw_{t}
&=&0  \label{h2} \\
u_{tt}-\frac{1}{2}(u-w) &=&0  \label{h3} \\
v_{tt}-v+\frac{\sqrt{2}}{2} &=&0  \label{h4} \\
w_{tt}+\frac{1}{2}(u-w) &=&0.  \label{h5}
\end{eqnarray}

From \ref{h3} and \ref{h5} 
\begin{equation*}
u_{tt}+w_{tt}=0.
\end{equation*}
\begin{equation*}
u+w=c_{1}t
\end{equation*}%
Thus,%
\begin{equation*}
u(t)=c_{2}(e^{t}-e^{-t})+\frac{1}{2}c_{1}t
\end{equation*}%
\begin{equation*}
w(t)=-c_{2}(e^{t}-e^{-t})+\frac{1}{2}c_{1}t
\end{equation*}%
From \ref{h4} 
\begin{equation*}
v(t)=c_{3}e^{t}+c_{4}e^{-t}+\frac{\sqrt{2}}{2}
\end{equation*}%
where $c_{2}+c_{3}=-\frac{\sqrt{2}}{2}$ .

From \ref{h1} and \ref{h2}%
\begin{equation*}
u(t)=\frac{1}{2}\cos c(-t+\sinh t)
\end{equation*}%
\begin{equation*}
v(t)=\sin c\sinh t-\frac{\sqrt{2}}{2}(\cosh t-1)
\end{equation*}%
\begin{equation*}
w(t)=-\frac{1}{4}\cos c(t+\sinh t).
\end{equation*}

The Frenet frame of the helix $\alpha $ is 
\begin{eqnarray*}
T(s) &=&(\frac{-\sqrt{2}}{2}\sin s,\frac{\sqrt{2}}{2}\cos s,\frac{\sqrt{2}}{2%
}), \\
N(s) &=&(-\cos s,-\sin s,0), \\
B(s) &=&(\frac{\sqrt{2}}{2}\sin s,-\frac{\sqrt{2}}{2}\cos s,\frac{\sqrt{2}}{2%
}).
\end{eqnarray*}

Consequently, we derive minimal surface family passing the helix $\alpha $
as 
\begin{eqnarray*}
y(s,t;c) &=&(\frac{\sqrt{2}}{2}\cosh t\cos s-\sinh t(\frac{\sqrt{2}}{2}\cos
c\sin s-\sin c\cos s), \\
&&\frac{\sqrt{2}}{2}\cosh t\sin s+\sinh t(\frac{\sqrt{2}}{2}\cos c\cos
s-\sin c\sin s), \\
&&\frac{\sqrt{2}}{2}t\cos c+\frac{\sqrt{2}}{2}s)
\end{eqnarray*}%
where $c\in 
\mathbb{R}
$ .

\begin{corollary}
For $c=0$, the helix $\alpha $ is a geodesic on the surface 
\begin{equation*}
y(s,t;0)=(\frac{\sqrt{2}}{2}\cosh t\cos s-\frac{\sqrt{2}}{2}\sinh t\sin s,%
\frac{\sqrt{2}}{2}\cosh t\sin s+\frac{\sqrt{2}}{2}\sinh t\cos s,\frac{\sqrt{2%
}}{2}t+\frac{\sqrt{2}}{2}s).
\end{equation*}
\end{corollary}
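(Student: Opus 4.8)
The plan is to verify the geodesic criterion of Wang et al., i.e.\ conditions \ref{geo1}--\ref{geo2}, for the curve $\alpha$ on the surface $y(\,\cdot\,,\,\cdot\,;0)$, taken with base value $t_{0}=0$. Recall that throughout this section the ansatz $u(s,t)=u(t)$, $v(s,t)=v(t)$, $w(s,t)=w(t)$ is in force, so that $u_{s}=v_{s}=w_{s}=0$ identically, and that at $t_{0}=0$ the closed forms give $u(0)=v(0)=w(0)=0$; hence \ref{isopara} holds and $y(s,0;0)=\alpha(s)$, so the surface does pass through $\alpha$. Being the $c=0$ member of the family $y(s,t;c)$ constructed so as to satisfy \ref{isot}--\ref{har3}, it is moreover a minimal surface, so the only thing left to establish is that $\alpha$ is a geodesic on it.

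First I would substitute $u=v=w=0$ and $u_{s}=v_{s}=w_{s}=0$ into the defining expressions for $\phi_{1},\phi_{2},\phi_{3}$. In $\phi_{1}=w_{t}(v_{s}+\kappa u-\tau w)-v_{t}(w_{s}+\tau v)$ both parenthesised factors vanish at $t=0$, so $\phi_{1}(s,0)=0$ identically; in $\phi_{3}=v_{t}(1+u_{s}-\kappa v)-u_{t}(v_{s}+\kappa u-\tau w)$ the second factor vanishes while the first equals $1$, so $\phi_{3}(s,0)=v_{t}(0)$; and likewise $\phi_{2}=u_{t}(w_{s}+\tau v)-w_{t}(1+u_{s}-\kappa v)$ gives $\phi_{2}(s,0)=-w_{t}(0)$. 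Thus, once $\phi_{1}(s,0)=0$ is noted, conditions \ref{geo1} and \ref{geo2} collapse to the two scalar statements $v_{t}(0)=0$ and $w_{t}(0)\neq 0$, each independent of $s$.

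Next I would specialise to $c=0$ the formulas for $v$ and $w$ obtained earlier in this section, namely $v(t)=-\tfrac{\sqrt 2}{2}(\cosh t-1)$ and $w(t)=-\tfrac14(t+\sinh t)$, differentiate, and evaluate at $t=0$: this gives $v_{t}(0)=-\tfrac{\sqrt 2}{2}\sinh 0=0$ and $w_{t}(0)=-\tfrac14(1+\cosh 0)=-\tfrac12\neq 0$, which are exactly the two requirements isolated above. Hence \ref{geo1} and \ref{geo2} are satisfied, and by the criterion of Wang et al.\ the helix $\alpha$ is a geodesic on $y(\,\cdot\,,\,\cdot\,;0)$.

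There is no genuine obstacle here; the statement is a direct verification once the geodesic criterion is available, the computation simplifying drastically because $u,v,w$ depend on $t$ alone and vanish at $t_{0}=0$. The only point requiring care is faithfully propagating those vanishings (and those of $u_{s},v_{s},w_{s}$) through the bulky formulas for $\phi_{1},\phi_{2},\phi_{3}$, so that one is honestly left with just the conditions on $v_{t}(0)$ and $w_{t}(0)$. One could equivalently argue geometrically: as $\alpha$ is unit speed, $\alpha''(s)=\kappa(s)N(s)=\tfrac{\sqrt 2}{2}N(s)$ is a nonzero multiple of $N(s)$, so $\alpha$ is a geodesic exactly when the surface unit normal $n(s,0)$ is collinear with $N(s)$, i.e.\ when the $T$- and $B$-components $\phi_{1}(s,0),\phi_{3}(s,0)$ of $x_{s}\times x_{t}$ vanish while the $N$-component $\phi_{2}(s,0)$ does not --- the same calculation in different dress.
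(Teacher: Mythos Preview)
Your proof is correct and follows exactly the approach the paper uses for its analogous geodesic corollaries: verifying the Wang et al.\ criterion \ref{geo1}--\ref{geo2} at $t_{0}=0$. The paper states this corollary without proof, but its proof of the circle case (Corollary~4) is simply ``For $c=\pm 1$, \ref{geo1} and \ref{geo2} are satisfied''; you have carried out the identical verification for the helix, only with the details of the reduction to $v_{t}(0)=0$ and $w_{t}(0)\neq 0$ spelled out explicitly.
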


\begin{corollary}
For $c=\frac{\pi }{2}$, the helix $\alpha $ is an asymptotic line on the
surface 
\begin{equation*}
y(s,t;\frac{\pi }{2})=(\frac{\sqrt{2}}{2}\cosh t\cos s+\frac{\sqrt{2}}{2}%
\sinh t\cos s,\frac{\sqrt{2}}{2}\cosh t\sin s-\frac{\sqrt{2}}{2}\sinh t\sin
s,\frac{\sqrt{2}}{2}s).
\end{equation*}
\end{corollary}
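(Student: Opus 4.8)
The plan is to verify, for $c=\frac{\pi}{2}$, the two requirements for $\alpha$ to be an asymptotic line on $y(s,t;c)$: that $\alpha$ is the parameter curve $t=0$ of the surface, and that the necessary and sufficient condition \ref{asymp} of Bayram et al., $\frac{\partial\phi_{1}}{\partial s}(s,0)-\kappa(s)\phi_{2}(s,0)=0$, holds at $t_{0}=0$. Since $\cos\frac{\pi}{2}=0$ and $\sin\frac{\pi}{2}=1$, substituting $c=\frac{\pi}{2}$ into the closed forms for $u,v,w$ obtained above for the helix collapses two of them to $u(t)\equiv 0$ and $w(t)\equiv 0$, leaving only $v(t)=\sinh t-\frac{\sqrt 2}{2}(\cosh t-1)$, and inserting this into \ref{surpen} returns the parametrization $y(s,t;\frac{\pi}{2})$ in the statement. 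Since $u(s,0)=w(s,0)=0$ and $v(0)=0$, we get $y(s,0;\frac{\pi}{2})=\alpha(s)$, so the curve-on-surface requirement \ref{isopara} is met.

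For the second requirement I would first note that $\phi_{1}(s,t_{0})$ vanishes identically for \emph{any} surface of the form \ref{surpen} satisfying \ref{isopara}: at $t_{0}$ the coefficient functions and their $s$-derivatives are all zero, so $v_{s}+\kappa u-\tau w$ and $w_{s}+\tau v$ both reduce to $0$, hence $\phi_{1}(s,t_{0})=w_{t}\cdot 0-v_{t}\cdot 0=0$ (this just reflects $x_{s}(s,t_{0})=T(s)$). Consequently $\frac{\partial\phi_{1}}{\partial s}(s,0)=0$ and \ref{asymp} reduces to $\phi_{2}(s,0)=0$. Now with $c=\frac{\pi}{2}$ we have $u\equiv w\equiv 0$, hence $u_{t}\equiv w_{t}\equiv 0$, so $\phi_{2}(s,t)=u_{t}(w_{s}+\tau v)-w_{t}(1+u_{s}-\kappa v)\equiv 0$; in particular $\phi_{2}(s,0)=0$. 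Therefore \ref{asymp} holds and $\alpha$ is an asymptotic line on $y(s,t;\frac{\pi}{2})$.

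I would close by recording the regularity that keeps the statement non-vacuous: along $\alpha$ the surface normal points along $\phi_{1}T+\phi_{2}N+\phi_{3}B$ with $\phi_{3}(s,0)=v_{t}(0)=\cosh 0-\frac{\sqrt 2}{2}\sinh 0=1\neq 0$ (using $u\equiv 0$ and $v(0)=0$), so $y(s,t;\frac{\pi}{2})$ is a regular surface near $\alpha$ and $n(s,0)=B(s)$ is well defined. There is no real obstacle here; the only care needed is in evaluating the Frenet-frame expressions for $\phi_{1},\phi_{2},\phi_{3}$ at $t_{0}$ after setting $u$ and $w$ to zero. As a cross-check one could bypass \ref{asymp} and compute the coefficient $e=\langle n,y_{ss}\rangle$ directly on $\alpha$: since $y_{ss}(s,0)=\alpha''(s)=\kappa(s)N(s)$ by the Frenet equations and $n(s,0)=B(s)$, we get $e(s,0)=\kappa(s)\langle B(s),N(s)\rangle=0$, which is exactly the asymptotic condition.
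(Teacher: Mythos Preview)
Your proposal is correct and follows the same route as the paper—namely, verifying the asymptotic condition \ref{asymp} at $t_{0}=0$—only you spell out the evaluation of $\phi_{1}$ and $\phi_{2}$ explicitly and add a regularity check ($\phi_{3}(s,0)\neq 0$) and a direct cross-check via the normal curvature $e$, none of which appear in the paper's one-line proof. Your general observation that $\phi_{1}(s,t_{0})\equiv 0$ whenever \ref{isopara} holds (so that \ref{asymp} always reduces to $\kappa(s)\phi_{2}(s,t_{0})=0$) is a nice simplification not made explicit in the paper.
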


\begin{proof}
For $c=\frac{\pi }{2}$, \ref{asymp} is satisfied. Thus, the helix $\alpha $
is a asymptotic line on the surface $y(s,t;\frac{\pi }{2})$.
\end{proof}

We show that some members of minimal surface family in Figure~\ref{fig:Figure5} -~\ref{fig:Figure8} for $c=0,$
$\frac{\pi }{4},$ $\frac{\pi }{2}$ and $0\leq s\leq 2\pi $, $-2\leq t\leq 2$.

\begin{figure}[here]
\centering
\includegraphics{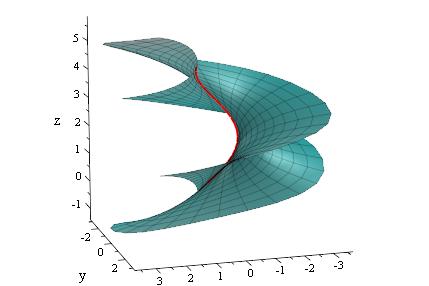}
\caption{The helix $\protect\alpha $ and minimal surface $y(s,t;0).$}
\label{fig:Figure5}
\end{figure}

\begin{figure}[here]
\centering
\includegraphics{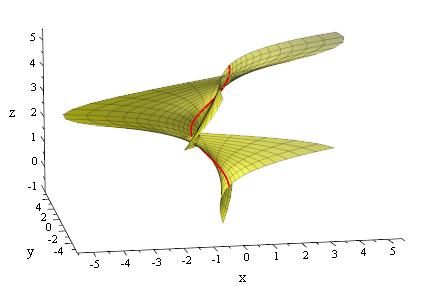}
\caption{The helix $\protect\alpha $ and minimal surface $y(s,t;\frac{\protect\pi }{4}).$}
\label{fig:Figure6}
\end{figure}
\newpage

\begin{figure}[here]
\centering
\includegraphics{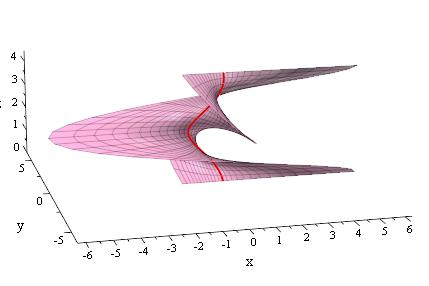}
\caption{The helix $\protect\alpha $ and minimal surface $y(s,t;\frac{\protect\pi }{2}).$}
\label{fig:Figure7}
\end{figure}

\begin{figure}[here]
\centering
\includegraphics{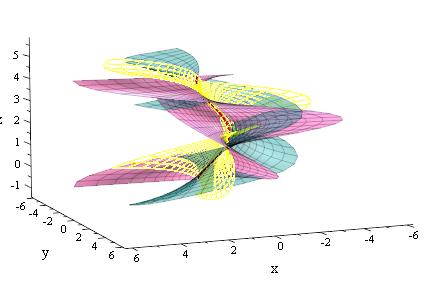}
\caption{Three members of minimal surface family $y(s,t;c)$.}
\label{fig:Figure8}
\end{figure}
\newpage

\section{Conclusion}

We derive necessary and sufficient condition for minimal surface passing a
given curve. Also, we present parametric representation of minimal surface
families passing a circle and a helix.

\end{document}